\newtheorem{thm}{Theorem}[section]
\newtheorem{lemma}{Lemma}[section]
\newtheorem{cor}{Corollary}[section]
\newtheorem{defn}{Definition}[section]
\def\R{{\mathfrak R}\, }
\def\ci{\begin{color}{red}\,}
\def\cf{\end{color}\,}
\begin{document}
\begin{center}
{\Large\bf Generalized Jordan derivations on semiprime rings}

\vspace{.2in}
{\bf Bruno L. M. Ferreira\textsuperscript{1}} 
\\
{\bf Henrique Guzzo Jr\textsuperscript{2}}
\\
and
\\
{\bf Ruth N. Ferreira\textsuperscript{3}}
\vspace{.2in}

 Universidade Tecnol\'{o}gica Federal do Paran\'{a}\textsuperscript{1,3}, Avenida Professora Laura Pacheco Bastos, 800, 85053-510, Guarapuava, Brazil.
\\
and
\\
 Universidade de S\~{a}o Paulo\textsuperscript{2}, Rua do Mat\~{a}o, 1010, 05508-090, S\~{a}o Paulo, Brazil.
\vspace{.2in}
 
brunoferreira@utfpr.edu.br
\\
guzzo@ime.usp.br
\\
and
\\
ruthnascimento@utfpr.edu.br
\vspace{.2in}

\end{center}

\begin{abstract}
The purpose of this note is to prove the following. Suppose $\R$ is a
semiprime unity ring having an idempotent element e $\left(e \neq 0, e \neq 1\right)$ which satisfies mild conditions. It is shown that every additive generalized Jordan derivation on $\R$ is a generalized derivation.
\end{abstract}

{\bf Mathematics Subject Classification (2010):} 16W25; 47B47

{\bf keywords:} Jordan derivation; Generalized Jordan derivation; Rings

\section{Introduction}
Let $\R$ be a ring, recall that an additive (linear) map $\delta$ from $\R$ in itself is called a derivation if $\delta(ab) = \delta(a)b+a\delta(b)$
for all $a,b \in \R$; a Jordan derivation if $\delta(a^2) = \delta(a)a+a\delta(a)$ for each $a \in \R$; and a Jordan
triple derivation if $\delta(aba) = \delta(a)ba+a\delta(b)a+ab\delta(a)$ for all $a,b \in \R$. More generally, if
there is a derivation $\tau : \R \rightarrow \R$ such that $\delta(ab) = \delta(a)b +a\tau (b)$ for all $a,b \in \R$, then $\delta$ is
called a generalized derivation and $\tau$ is the relating derivation; if there is a Jordan derivation
$\tau : \R \rightarrow \R$ such that $\delta(a^2) = \delta(a)a + a\tau(a)$ for all $a \in \R$, then $\delta$ is called a generalized
Jordan derivation and $\tau$ is the relating Jordan derivation. 
The structures of derivations, Jordan derivations, generalized derivations and generalized
Jordan derivations were systematically studied. It is obvious that every generalized derivation
is a generalized Jordan derivation and every derivation is a Jordan derivation. But the converse is in general not true.
Herstein \cite{Herstein} showed that every Jordan derivation from a $2$-torsion free prime ring into itself is a derivation.
Bre$\check{s}$ar \cite{Bresar} proved that Herstein’s result is true for $2$-torsion free semiprime rings.
Jing and Lu, motivated by the concept of generalized derivation, initiate this concept of generalized Jordan derivation in \cite{Jing}. Moreover in \cite{Jing} the authors conjecture that every generalized Jordan derivation on $2$-torsion free semiprime ring is a generalized derivation.

In the present paper we characterized generalized Jordan derivation on a semiprime ring $\R$. We prove that if there is a nontrivial idempotent element in $\R$ which satisfies mild conditions, then every additive generalized Jordan derivation is an additive generalized derivation.

In the ring $\R$, let $e$ be an idempotent element so that $e \neq 0$, $e \neq 1$. As in \cite{Jacobson}, the two-sided Peirce decomposition of $\R$ relative to the
idempotent e takes the form $\R = e\R e \oplus e\R(1-e)\oplus (1-e)\R e \oplus (1-e)\R(1-e)$. We will formally
set $e_1= e$ and $e_2= 1-e$. So letting $\R_{mn} = e_m \R e_n$; $m,n = 1,2$ we may write $\R = \R_{11} \oplus \R_{12} \oplus\R_{21} \oplus \R_{22}$. Moreover an element of the subring $\R_{mn}$ will be denoted by $a_{mn}$. 

\section{Results and proofs}

In this section, we discuss the additive generalized Jordan derivations on rings. The
following is our main result:

\begin{thm}\label{main}
Let $\R$ be a $2$-torsion free semiprime unity ring and $\delta$ be an additive generalized Jordan
derivation from $\R$ into itself. If there exist an idempotent $e$ so that $e \neq 0$, $e \neq 1$ in $\R$, then $\delta$ is an additive generalized derivation.
\end{thm}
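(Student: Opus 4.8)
The plan is to reduce the statement to a purely multiplicative fact about centralizers. First I would polarize the defining identity: replacing $a$ by $a+b$ in $\delta(a^2)=\delta(a)a+a\tau(a)$ and subtracting off the two pure instances yields
\[
\delta(ab+ba)=\delta(a)b+\delta(b)a+a\tau(b)+b\tau(a)
\]
for all $a,b\in\R$. Since $\tau$ is a Jordan derivation on a $2$-torsion free semiprime ring, Bre\v{s}ar's theorem (recalled in the introduction) already guarantees that $\tau$ is an honest derivation, so I may use $\tau(ab)=\tau(a)b+a\tau(b)$ freely. The conclusion to aim for is $\delta(ab)=\delta(a)b+a\tau(b)$, which will exhibit $\delta$ as a generalized derivation with relating derivation $\tau$.

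The conceptual key is to set $d:=\delta-\tau$. Because $\delta$ and $\tau$ satisfy the same Jordan identity, the cross term $a\tau(a)$ cancels and one is left with
\[
d(a^2)=\delta(a)a+a\tau(a)-\tau(a)a-a\tau(a)=d(a)a,
\]
so $d$ is an additive left Jordan centralizer. If I can upgrade this to a genuine left centralizer, i.e. $d(ab)=d(a)b$ for all $a,b$, then $\delta(ab)=d(ab)+\tau(ab)=d(a)b+\tau(a)b+a\tau(b)=\delta(a)b+a\tau(b)$, exactly the desired identity. Thus the whole theorem reduces to showing that the Jordan centralizer $d$ is a centralizer.

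To attack this I would first extract a triple identity. Expanding $d\big(a\circ(a\circ b)\big)-d(a^2\circ b)=2\,d(aba)$, where $a\circ b=ab+ba$, by means of the polarized identity and invoking $2$-torsion freeness gives $d(aba)=d(a)ba$; polarizing in $a$ then yields $d(abc+cba)=d(a)bc+d(c)ba$. Now the idempotent enters as an organizing device: I would expand $a=\sum_{m,n}a_{mn}$ and $b=\sum_{k,l}b_{kl}$ in the Peirce decomposition $\R=\R_{11}\oplus\R_{12}\oplus\R_{21}\oplus\R_{22}$ and specialize the middle or outer factors to $e_1,e_2$. On the off-diagonal blocks this is clean because $a_{mn}^2=0$ for $m\neq n$, so the Jordan identity degenerates and pins down $d$ on both aligned ($j=k$) and misaligned ($j\neq k$) products. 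Writing $A(a,b):=d(ab)-d(a)b$, which the polarized identity forces to be skew, $A(a,b)=-A(b,a)$, I would verify that $A$ vanishes on each pair of Peirce components.

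The main obstacle is the diagonal blocks $\R_{11}$ and $\R_{22}$, where squares do not vanish and the idempotent offers no shortcut. Here I expect to rely decisively on semiprimeness: one shows that the residual defect $w=A(a_{ii},b_{ii})\in\R_{ii}$ annihilates itself in the strong sense $w\,\R\,w=0$, whence $w=0$ since $\R$ is semiprime. Collecting the block-by-block vanishing of $A$ gives $d(ab)=d(a)b$ for all $a,b$, which completes the reduction and hence establishes that $\delta$ is an additive generalized derivation.
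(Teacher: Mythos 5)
Your reduction is sound and is a genuinely different route from the paper's: with $d:=\delta-\tau$ one indeed gets $d(a^2)=d(a)a$, and (using Bre\v{s}ar's theorem for $\tau$) the theorem becomes equivalent to the statement that an additive left Jordan centralizer on a $2$-torsion free semiprime ring is a left centralizer. The paper proceeds differently: it normalizes by the inner derivation $d_s$, where $s$ is chosen so that $\tau(e_1)=[e_1,s]$, and then runs a block-by-block Peirce analysis of both $\Delta=\delta-d_s$ and $\Xi=\tau-d_s$, closing the argument with the hypotheses ($\spadesuit$) and ($\clubsuit$) that it imposes just after the theorem statement. Your derived identities ($d(ab+ba)=d(a)b+d(b)a$, $d(aba)=d(a)ba$, $d(abc+cba)=d(a)bc+d(c)ba$, and the skewness of $A$) are all correct.

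The gap is that the decisive step is asserted rather than proved. On the diagonal blocks you write that ``one shows'' the defect $w=A(a_{ii},b_{ii})$ satisfies $w\R w=0$; no derivation is offered, and manufacturing such a relation from the identities above is exactly the hard part of the whole theorem --- it is the content of Zalar's theorem that a left Jordan centralizer of a $2$-torsion free semiprime ring is a left centralizer, whose proof is a specific nontrivial computation rather than a routine consequence of the Peirce setup. The off-diagonal cases are also not as automatic as you suggest: for instance the polarized identity gives $d(a_{11}b_{12})=d(a_{11})b_{12}+d(b_{12})a_{11}$, and to discard the term $d(b_{12})a_{11}$ you must first locate $d(\R_{12})$ in the Peirce grid (the analogue of the paper's Lemmas \ref{lema5} and \ref{lema6}), which you have not done; pursuing the Peirce route to the end would moreover leave you with relations of the form $(\,\cdot\,)x_{12}=0$ and $(\,\cdot\,)x_{22}=0$, from which one cannot conclude without conditions like ($\spadesuit$) and ($\clubsuit$). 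So the proposal is an attractive and correct plan --- and, if completed via Zalar's theorem, it would even dispense with the idempotent and with those extra conditions --- but as it stands the centralizer upgrade $d(ab)=d(a)b$ is not established.
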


Henceforth let $\R$ be a $2$-torsion free semiprime unity ring containing a nontrivial idempotent $e_1$.
Consider $\R = \R_{11} \oplus \R_{12} \oplus\R_{21} \oplus \R_{22}$ the Peirce decomposition relative to the idempotent $e_1$ satisfying the following conditions
\begin{enumerate}
\item [($\spadesuit$)] If $x\cdot\R_{12} = 0$ then $e_1 x e_1 = 0$ and $e_2 x e_1 = 0$, 
\item [($\clubsuit$)] If $x\cdot\R_{22} = 0$ then $e_1 x e_2 = 0$ and $e_2 x e_2 = 0$,
 \end{enumerate}
$\delta: \R \rightarrow \R$ is an additive generalized Jordan derivation and
$\tau: \R \rightarrow \R$ is the relating Jordan derivation such that $\delta(a^2) = \delta(a)a + a\tau(a)$ for all $a \in \R$. We will complete the proof of above theorem by proving several lemmas.

\begin{lemma}\label{lema1}
For all $a, b, c \in \R$, the following statements hold:
\begin{enumerate}
\item[\it (i)] $\delta(ab + ba) = \delta(a)b + a\tau(b) + \delta(b)a + b\tau(a)$;
\item[\it (ii)] $\delta(aba) = \delta(a)ba + a\tau(b)a + ab\tau(a)$;
\item[\it (iii)] $\delta(abc + cba) = \delta(a)bc + a\tau(b)c + ab\tau(c) + \delta(c)ba + c\tau(b)a + cb\tau(a)$.
\end{enumerate}
\end{lemma}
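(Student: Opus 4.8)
The plan is to derive all three identities from the single defining relation $\delta(a^2)=\delta(a)a+a\tau(a)$ by the standard linearization trick, exploiting that $\delta$, $\tau$ are additive and $\R$ is $2$-torsion free. The point of ordering is that (i) is a pure linearization, (ii) follows from combining the defining relation with (i) applied cleverly, and (iii) is the full linearization of (ii).

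For part (i), I would replace $a$ by $a+b$ in $\delta(a^2)=\delta(a)a+a\tau(a)$. Expanding $\delta\big((a+b)^2\big)=\delta(a^2+ab+ba+b^2)$ and using additivity of $\delta$ and $\tau$ on the right-hand side, the terms $\delta(a^2)$, $\delta(b^2)$, $\delta(a)a+a\tau(a)$, and $\delta(b)b+b\tau(b)$ cancel in pairs, leaving exactly $\delta(ab+ba)=\delta(a)b+a\tau(b)+\delta(b)a+b\tau(a)$. No torsion-freeness is needed here.

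For part (ii), the natural route is to compute $\delta$ of the Jordan triple product $aba$ via the identity $aba=\tfrac12\big(a(ab+ba)+(ab+ba)a-(a^2b+ba^2)\big)$, or more directly to observe that $ab+ba$ and $a^2$ interact so that $a(ba)+(ba)a=aba+aba+\dots$; the cleaner approach is to apply (i) with the pair $(a, ab+ba)$ and with the pair $(a^2,b)$ and subtract. Concretely, I would use $2aba=a(ab+ba)+(ab+ba)a-(a^2b+ba^2)$ and apply the additive identity (i) to each Jordan product $x\circ y=xy+yx$ appearing, then collect terms. Here $2$-torsion freeness is essential to cancel the factor $2$ and isolate $\delta(aba)=\delta(a)ba+a\tau(b)a+ab\tau(a)$; this is the step I expect to be the main obstacle, since one must track that every $\tau$-term lands in the right slot and that $\tau$ itself satisfies the Jordan relation $\tau(a^2)=\tau(a)a+a\tau(a)$ so that the auxiliary $\tau$-products combine correctly.

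For part (iii), I would linearize (ii) by substituting $a\mapsto a+c$. Expanding $\delta\big((a+c)b(a+c)\big)$ and subtracting $\delta(aba)$ and $\delta(cbc)$ (each given by (ii)) leaves $\delta(abc+cba)$ on the left; on the right, the cross terms assemble into $\delta(a)bc+a\tau(b)c+ab\tau(c)+\delta(c)ba+c\tau(b)a+cb\tau(a)$ after using additivity of $\delta$ and $\tau$. The only care required is to group the six surviving terms correctly according to which variable carries $\delta$ versus $\tau$; no further torsion assumption beyond what (ii) already used is needed.
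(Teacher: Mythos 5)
Your proposal is correct and is essentially the standard linearization argument that the paper defers to by simply citing Lemma 2.1 of Jing and Lu: (i) by substituting $a+b$ into the defining relation, (ii) by applying (i) to the Jordan products $a\circ(ab+ba)$ and $a^2\circ b$ together with the identity $2aba=a(ab+ba)+(ab+ba)a-(a^2b+ba^2)$ and $2$-torsion freeness, and (iii) by linearizing (ii) in $a$. You also correctly identify the two places where extra input is needed, namely the Jordan relation for $\tau$ when expanding $\tau(ab+ba)$ and the cancellation of the factor $2$.
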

\begin{proof}
See Lemma $2.1$ in \cite{Jing}.
\end{proof}
\begin{lemma}\label{lema2}
$\tau(e_1) = [e_1, s]$ for some $s \in \R$, where $[x,y] = xy - yx$ for $x, y \in \R$.
\end{lemma}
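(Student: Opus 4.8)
The plan is to use only the fact that $\tau$ is a Jordan derivation together with the idempotency of $e_1$; neither semiprimeness nor the conditions $(\spadesuit),(\clubsuit)$ should be needed here. Write $t = \tau(e_1)$. Since $e_1^2 = e_1$, applying the defining identity $\tau(a^2) = \tau(a)a + a\tau(a)$ with $a = e_1$ gives
\[
t = te_1 + e_1 t .
\]
This single relation is the whole engine of the argument, and everything else is Peirce-block bookkeeping.

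First I would extract the diagonal Peirce components of $t$. Multiplying the displayed identity on the right by $e_1$ yields $te_1 = te_1 + e_1 t e_1$, so $e_1 t e_1 = 0$; multiplying on the left by $e_2 = 1 - e_1$ and using $e_2 e_1 = 0$ gives $e_2 t = e_2 t e_1$, and then right-multiplication by $e_2$ forces $e_2 t e_2 = 0$. Hence both diagonal blocks of $t$ vanish and
\[
t = e_1 t e_2 + e_2 t e_1 ,
\]
i.e. $\tau(e_1)$ lies in $\R_{12} \oplus \R_{21}$.

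Finally I would produce the commutator presentation by an explicit choice of $s$. Setting $s = e_1 t e_2 - e_2 t e_1$ and using $e_1 e_2 = e_2 e_1 = 0$, one computes $e_1 s = e_1 t e_2$ and $s e_1 = -e_2 t e_1$, so that $[e_1, s] = e_1 s - s e_1 = e_1 t e_2 + e_2 t e_1 = t = \tau(e_1)$, as required. I do not expect a genuine obstacle in this lemma; the only point that requires care is the \emph{sign} attached to the two off-diagonal pieces. Taking $s = t$ itself would give $[e_1, t] = e_1 t e_2 - e_2 t e_1$, which flips the $\R_{21}$ component, so the two blocks must be combined with opposite signs in the definition of $s$ for $[e_1,s]$ to reproduce $\tau(e_1)$ exactly.
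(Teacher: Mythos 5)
Your proposal is correct and follows essentially the same route as the paper: apply the Jordan identity at $e_1$ to get $\tau(e_1)=\tau(e_1)e_1+e_1\tau(e_1)$, deduce that the two diagonal Peirce blocks vanish, and then take $s$ to be the difference of the two off-diagonal blocks so that $[e_1,s]$ recovers $\tau(e_1)$. Your remark about the necessity of the sign flip in $s=e_1te_2-e_2te_1$ matches the paper's choice $s=s_{12}-s_{21}$ exactly.
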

\begin{proof}
Write $\tau(e_1) = s_{11} + s_{12} + s_{21} +  s_{22}$. Since $\tau(e_1) = \tau(e_1)e_1 + e_1\tau(e_1)$, we have $s_{11} + s_{12} + s_{21}+
s_{22} = 2s_{11} + s_{12} + s_{21}$, which implies that $s_{11} = s_{22} = 0$ and $\tau(e_1) = s_{12} + s_{21}$. Let $s = s_{12} - s_{21}$. It is obvious
that $\tau(e_1) = [e_1, s]$.
\end{proof}

Observe that $d_s: \R \rightarrow \R$ so that $d_s(a) = [a,s]$ is a derivation and thus a Jordan derivation. Define $\Delta$ by $\Delta(a) = \delta(a) - d_{s}(a)$ for each $a \in \R$. Clearly, $\Delta$ is also an additive generalized Jordan derivation from $\R$ into itself, and $\Xi : \R \rightarrow \R$ defined by $\Xi(a) = \tau(a) - d_s(a)$ for each $a \in \R$ is the relating additive Jordan derivation.
Note that $\Xi(e_1) = \Xi(e_2) = 0$. 

\begin{lemma}\label{lema3}
$\Xi(a_{ij}) \subset \R_{ij}$ for any $a_{ij} \in \R_{ij}$ $(i, j = 1, 2)$.
\end{lemma}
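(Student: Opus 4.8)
The plan is to show that $\Xi$ preserves each Peirce corner, handling the diagonal and off-diagonal pieces separately and exploiting $\Xi(e_1)=\Xi(e_2)=0$ throughout. Since $\Xi$ is itself an additive Jordan derivation, it is a generalized Jordan derivation relating to itself, so the identities of Lemma \ref{lema1} apply to $\Xi$ with $\tau$ replaced by $\Xi$. For a diagonal element $a_{ii}=e_i a_{ii}e_i$, I would apply the Jordan triple identity (Lemma \ref{lema1}(ii)) with outer entry $e_i$ and inner entry $a_{ii}$: two of the three resulting terms carry a factor $\Xi(e_i)=0$, leaving $\Xi(a_{ii})=e_i\Xi(a_{ii})e_i\in\R_{ii}$.

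For an off-diagonal element, say $a_{12}$, I would first localize the image to the two off-diagonal corners. Writing $e_1 a_{12}+a_{12}e_1=a_{12}$ and applying Lemma \ref{lema1}(i) to the pair $\{e_1,a_{12}\}$ (again using $\Xi(e_1)=0$) gives $\Xi(a_{12})=e_1\Xi(a_{12})+\Xi(a_{12})e_1$; comparing Peirce components forces the $\R_{11}$- and $\R_{22}$-parts of $\Xi(a_{12})$ to vanish, so $\Xi(a_{12})\in\R_{12}\oplus\R_{21}$. It then remains to eliminate the spurious component $f(a_{12}):=e_2\Xi(a_{12})e_1\in\R_{21}$.

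To kill $f$, I would use $\R_{12}\R_{12}=0$: for $a,b\in\R_{12}$ we have $ab+ba=0$, and applying Lemma \ref{lema1}(i) and separating the $\R_{22}$- and $\R_{11}$-components yields the two relations $f(a)b+f(b)a=0$ and $af(b)+bf(a)=0$. Putting $a=b$ and using $2$-torsion freeness gives the one-sided annihilations $f(a)a=0$ and, crucially, $af(a)=0$. Now semiprimeness enters: since $f(a)\in\R_{21}$, a Peirce reduction gives $f(a)\R f(a)=f(a)\R_{12}f(a)$, and for any $b\in\R_{12}$ we compute $f(a)bf(a)=(f(a)b)f(a)=-(f(b)a)f(a)=-f(b)\bigl(af(a)\bigr)=0$. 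Hence $f(a)\R f(a)=0$, and semiprimeness forces $f(a)=0$, so $\Xi(a_{12})\in\R_{12}$. The symmetric argument, based on $\R_{21}\R_{21}=0$, disposes of the $\R_{21}$ case, completing the lemma.

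The main obstacle is this last step: isolating and annihilating the off-diagonal component $f(a_{12})$. The decisive point is that Lemma \ref{lema1}(i) applied to $ab+ba=0$ produces not one but two annihilation identities, so that both $f(a)a=0$ and $af(a)=0$ are available — exactly what is needed to collapse $f(a)\R_{12}f(a)$ to zero and invoke semiprimeness. I note that the standing hypotheses ($\spadesuit$) and ($\clubsuit$) offer an alternative finish here — if one instead verifies $\Xi(a_{12})\cdot\R_{12}=0$, then ($\spadesuit$) immediately yields $e_2\Xi(a_{12})e_1=0$ — and these conditions will in any case be needed when passing from Lemma \ref{lema3} to the full derivation property in the remaining lemmas.
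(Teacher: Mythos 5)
Your proposal is correct, and for the off-diagonal cases it takes a genuinely different --- and more complete --- route than the paper. The diagonal cases coincide in substance with the paper's Cases 1 and 2 (the paper treats $\R_{22}$ via $\Xi(e_1a_{22}+a_{22}e_1)=0$ rather than the triple product $e_2a_{22}e_2$, but both are one-line uses of Lemma \ref{lema1} plus $\Xi(e_1)=\Xi(e_2)=0$). The divergence is in Case 3: the paper asserts $\Xi(a_{12})=\Xi(e_1a_{12}+a_{12}e_1)=e_1\Xi(a_{12})$, but Lemma \ref{lema1}(i) actually gives $e_1\Xi(a_{12})+\Xi(a_{12})e_1$, and together with $e_1\Xi(a_{12})e_1=0$ this only localizes $\Xi(a_{12})$ to $\R_{12}\oplus\R_{21}$ --- precisely the intermediate stage you reach. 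Your extra step --- extracting $f(a)b+f(b)a=0$ and $af(b)+bf(a)=0$ from $\Xi(ab+ba)=0$ on $\R_{12}$, getting $f(a)a=af(a)=0$ by $2$-torsion freeness, and collapsing $f(a)\R f(a)=f(a)\R_{12}f(a)$ to zero so that semiprimeness forces $f(a)=0$ --- is exactly what is needed to eliminate the spurious $\R_{21}$ component, and the symmetric argument settles Case 4. In short, your argument justifies a step (the silently dropped term $\Xi(a_{12})e_1$) that the paper's proof does not, at the cost of invoking semiprimeness, which the paper's version of this lemma never uses. One caveat on your closing aside: proving $\Xi(a_{12})\cdot\R_{12}=0$ directly so as to invoke ($\spadesuit$) is not obviously easier, since $f(a)b=-f(b)a$ need not vanish term by term; the semiprimeness route you actually carry out is the one that works.
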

\begin{proof}
\textbf{Case $1$:} For $i=j=1$, $a_{11}=e_1 a_{11}e_1$ we have from Lemma \ref{lema1} item $(ii)$
$$\Xi(a_{11})= \Xi(e_1 a_{11}e_1) = \Xi(e_1) a_{11}e_1 + e_1 \Xi(a_{11})e_1 + e_1 a_{11}\Xi(e_1) = e_1 \Xi(a_{11})e_1.$$
Hence $\Xi(a_{11}) \in \R_{11}$.

\textbf{Case $2$:} For $i=j=2$ write $\Xi(a_{22}) = b_{11} + b_{12} + b_{21} + b_{22}$ we have from Lemma \ref{lema1} item $(i)$
\begin{eqnarray*}
0=\Xi(e_1 a_{22} + a_{22}e_1) &=& \Xi(e_1)a_{22} + e_1\Xi(a_{22}) + \Xi(a_{22})e_1 + a_{22}\Xi(e_1) \\&=& e_1\Xi(a_{22}) + \Xi(a_{22})e_1 = 2b_{11} + b_{12} + b_{21}.
\end{eqnarray*}
Thus, $\Xi(a_{22}) \in \R_{22}$.

\textbf{Case $3$:} For $i=1$ and $j=2$ write $\Xi(a_{12}) = b_{11} + b_{12} + b_{21} + b_{22}$ we have from Lemma \ref{lema1} items $(i), (ii)$
$$\Xi(a_{12}) = \Xi(e_1a_{12} + a_{12}e_1) = e_1\Xi(a_{12})$$
and
$$0 = \Xi(e_1 a_{12} e_1) = e_1 \Xi(a_{12})e_1.$$
Hence, $\Xi(a_{12}) \in \R_{12}$.

\textbf{Case $4$:} Finally, for $i=2$ and $j=1$ write $\Xi(a_{21}) = b_{11} + b_{12} + b_{21} + b_{22}$ we have from Lemma \ref{lema1} items $(i), (ii)$
$$\Xi(a_{21}) = \Xi(e_1a_{21} + a_{21}e_1) = \Xi(a_{21})e_1$$
and
$$0 = \Xi(e_1 a_{21} e_1) = e_1 \Xi(a_{21})e_1.$$
Thus, $\Xi(a_{21}) \in \R_{21}$.
\end{proof}

\begin{lemma}\label{lema5}
$\Delta(a_{ij}) \subset \R_{ij} + \R_{jj}$. 
\end{lemma}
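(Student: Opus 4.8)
The plan is to show that $\Delta$ leaves the right-hand Peirce index of a homogeneous element unchanged: for every $a_{ij}\in\R_{ij}$ I will establish $\Delta(a_{ij})=\Delta(a_{ij})e_j$, which is exactly the assertion $\Delta(a_{ij})\in\R e_j=\R_{ij}+\R_{jj}$. The whole computation rests on Lemma \ref{lema1}(i) applied to the pair $(a_{ij},e_j)$, together with the two facts already in hand: $\Xi(e_1)=\Xi(e_2)=0$ and $\Xi(\R_{ij})\subseteq\R_{ij}$ from Lemma \ref{lema3}. I do not expect to need the idempotent conditions $(\spadesuit)$ and $(\clubsuit)$ at this stage; they should enter only later, when the left-hand index has to be controlled.

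First I would record how $\Delta$ acts on the idempotents themselves. Feeding $a=e_j$ into the defining identity $\Delta(a^2)=\Delta(a)a+a\Xi(a)$ and using $e_j^2=e_j$ together with $\Xi(e_j)=0$ yields $\Delta(e_j)=\Delta(e_j)e_j$, so $\Delta(e_j)\in\R e_j$. This is the only information about $\Delta$ on idempotents that the main step consumes.

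For an off-diagonal element ($i\neq j$) I would apply Lemma \ref{lema1}(i) to $(a_{ij},e_j)$. Because $a_{ij}e_j=a_{ij}$ and $e_je_i=0$ gives $e_ja_{ij}=0$, the left-hand side reduces to $\Delta(a_{ij})$. On the right, $a_{ij}\Xi(e_j)=0$; the term $e_j\Xi(a_{ij})$ vanishes since $\Xi(a_{ij})\in\R_{ij}=e_i\R e_j$ and $e_je_i=0$; and $\Delta(e_j)a_{ij}=\Delta(e_j)e_ja_{ij}=0$ by the preliminary step and $e_ja_{ij}=0$. What remains is $\Delta(a_{ij})=\Delta(a_{ij})e_j$, i.e. $\Delta(a_{ij})\in\R e_j=\R_{ij}+\R_{jj}$.

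For a diagonal element ($i=j$) the same identity applied to $(a_{jj},e_j)$ becomes $2\Delta(a_{jj})=\Delta(a_{jj})e_j+\Delta(e_j)a_{jj}+\Xi(a_{jj})$, since $a_{jj}e_j+e_ja_{jj}=2a_{jj}$ and $\Xi(e_j)=0$. Each summand on the right has right-hand Peirce index $j$: the terms $\Delta(a_{jj})e_j$ and $\Delta(e_j)a_{jj}=\Delta(e_j)a_{jj}e_j$ visibly, and $\Xi(a_{jj})\in\R_{jj}\subseteq\R e_j$. Hence $2\Delta(a_{jj})\in\R e_j$, and because $\R$ is $2$-torsion free this forces $\Delta(a_{jj})\in\R e_j$. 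I expect this diagonal case to be the only delicate point of the lemma: it is where the factor $2$ obliges me to invoke the torsion hypothesis, and where the conclusion is genuinely the full slice $\R e_j=\R_{1j}\oplus\R_{2j}$ rather than the corner $e_j\R e_j$.
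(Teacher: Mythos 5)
Your argument is correct and follows essentially the same route as the paper: Lemma \ref{lema1}(i) paired with the idempotent $e_j$, the vanishing $\Xi(e_j)=0$, Lemma \ref{lema3}, and the preliminary identity $\Delta(e_j)=\Delta(e_j)e_j$ --- your single computation $\Delta(a_{ij})=\Delta(a_{ij})e_j$ is just a slightly cleaner packaging of the paper's two expansions in its Case~1. Note only that your diagonal case establishes the conclusion of Lemma \ref{lema6} (namely $\Delta(a_{jj})\in\R e_j$, not $\R_{jj}$), which is consistent with the paper's implicit reading of the present lemma as an off-diagonal ($i\neq j$) statement.
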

\begin{proof}
Firstly,we prove that $\Delta(e_1) \in \R_{11} + \R_{21}$. Let $\Delta(e_1) = a_{11} + a_{12} + a_{21} + a_{22}$. Since $\Delta(e_1) = \Delta(e_1)e_1 +e_1\Xi(e_1) = \Delta(e_1)e_1$, we see that $a_{11} + a_{12} + a_{21}+ a_{22} = a_{11} + a_{21}$, which implies that $a_{12} = a_{22} = 0$ and $\Delta(e_1) = a_{11} + a_{21} \in \R_{11} + \R_{21}$.

\textbf{Case $1$:} For $i=1$ and $j=2$ let $a_{12} \in \R_{12}$ and $\Delta(a_{12}) = b_{11} + b_{12} + b_{21} + b_{22}$. Then 
\begin{eqnarray*}
b_{11} + b_{12} + b_{21} + b_{22} &=& \Delta(a_{12}) \\&=& \Delta(e_1 a_{12} + a_{12}e_1) \\&=& \Delta(e_1)a_{12} + e_1\Xi(a_{12}) + \Delta(a_{12})e_1 + a_{12}\Xi(e_1)\\&=& \Delta(e_1)a_{12} + \Xi(a_{12}) + b_{11} + b_{21}. 
\end{eqnarray*}
Hence $b_{12} + b_{22} = \Delta(e_1)a_{12} + \Xi(a_{12}) \in \R_{12} + \R_{22}$.
On the other hand, 
\begin{eqnarray*}
b_{11} + b_{12} + b_{21} + b_{22} &=& \Delta(a_{12})= \Delta(a_{12}e_2 + e_2a_{12})\\&=& \Delta(a_{12})e_2 + a_{12}\Xi(e_2)+\Delta(e_2)a_{12} + e_2\Xi(a_{12})\\&=& \Delta(a_{12})e_2 + \Delta(e_2)a_{12}\\&=& b_{12} + b_{22} + \Delta(e_2)a_{12}.
\end{eqnarray*}
Thus we get $b_{11} + b_{12} + b_{21} + b_{22} = \Delta(e_1)a_{12} + \Xi(a_{12}) + \Delta(e_2)a_{12}$ , which implies that $\Delta(a_{12}) \in \R_{12} + \R_{22}$.

\textbf{Case $2$:} For $i=2$ and $j=1$ let $a_{21} \in \R_{21}$ and $\Delta(a_{21}) = b_{11} + b_{12} + b_{21} + b_{22}$. Then
\begin{eqnarray*}
 b_{11} + b_{12} + b_{21} + b_{22} &=& \Delta(a_{21}) \\&=& \Delta(a_{21} e_1 + e_1 a_{21}) \\&=& \Delta(a_{21})e_1 + a_{21}\Xi(e_1) + \Delta(e_{1})a_{21} + e_1\Xi(a_{21}) \\&=& b_{11} + b_{21}.
 \end{eqnarray*} 
Therefore $\Delta(a_{21}) \in \R_{11} + \R_{21}$.
\end{proof}

\begin{lemma}\label{lema6}
$\Delta(a_{ii}) \subset \R_{ii} + \R_{ji}$ with $i\neq j$. 
\end{lemma}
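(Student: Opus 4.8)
The plan is to treat the two diagonal blocks separately, mirroring the argument used for the off-diagonal blocks in Lemma \ref{lema5}. For $i=j=1$ I want to show $\Delta(a_{11}) \in \R_{11} + \R_{21}$, and for $i=j=2$ that $\Delta(a_{22}) \in \R_{22} + \R_{12}$. The common idea is to exploit that $a_{ii}$ is fixed by left and right multiplication by $e_i$, so that $2a_{ii} = e_i a_{ii} + a_{ii} e_i$, and then to apply Lemma \ref{lema1} item $(i)$ together with the block information already available: $\Xi(e_1) = \Xi(e_2) = 0$, $\Xi(a_{ii}) \subset \R_{ii}$ from Lemma \ref{lema3}, and the location of $\Delta(e_i)$.

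Before running the case $i=2$ I would first record that $\Delta(e_2) \in \R_{22} + \R_{12}$, which is not yet in hand, since Lemma \ref{lema5} only produced $\Delta(e_1) \in \R_{11} + \R_{21}$. This comes for free from the generalized Jordan identity applied to $e_2^2 = e_2$: since $\Xi(e_2) = 0$ one gets $\Delta(e_2) = \Delta(e_2)e_2 + e_2\Xi(e_2) = \Delta(e_2)e_2$, and writing $\Delta(e_2) = c_{11} + c_{12} + c_{21} + c_{22}$ forces the first-column components to vanish, leaving $\Delta(e_2) = c_{12} + c_{22}$.

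For the case $i=1$, writing $\Delta(a_{11}) = b_{11} + b_{12} + b_{21} + b_{22}$ and applying Lemma \ref{lema1} item $(i)$ to $2a_{11} = e_1 a_{11} + a_{11}e_1$ gives
$$2\Delta(a_{11}) = \Delta(a_{11})e_1 + \Delta(e_1)a_{11} + e_1\Xi(a_{11}),$$
using $\Xi(e_1) = 0$. Now $\Delta(a_{11})e_1 = b_{11} + b_{21}$, while $\Delta(e_1) \in \R_{11} + \R_{21}$ forces $\Delta(e_1)a_{11} \in \R_{11} + \R_{21}$, and $e_1\Xi(a_{11}) = \Xi(a_{11}) \in \R_{11}$ by Lemma \ref{lema3}. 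Hence the whole right-hand side lies in $\R_{11} + \R_{21}$; comparing the $\R_{12}$ and $\R_{22}$ components yields $2b_{12} = 0$ and $2b_{22} = 0$, and $2$-torsion freeness gives $b_{12} = b_{22} = 0$, i.e. $\Delta(a_{11}) \in \R_{11} + \R_{21}$. The case $i=2$ is entirely symmetric: apply Lemma \ref{lema1} item $(i)$ to $2a_{22} = e_2 a_{22} + a_{22}e_2$, use $\Xi(e_2) = 0$, $\Delta(e_2) \in \R_{22} + \R_{12}$, and $\Xi(a_{22}) \in \R_{22}$ to conclude that the $\R_{11}$ and $\R_{21}$ components of $\Delta(a_{22})$ are killed.

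I do not expect a genuine obstacle here; the only point that needs care is that the symmetric case $i=2$ relies on knowing that $\Delta(e_2)$ lies in $\R_{22} + \R_{12}$, which is why I prepare that fact first rather than quoting Lemma \ref{lema5}. Everything else is bookkeeping of Peirce components plus one appeal to $2$-torsion freeness.
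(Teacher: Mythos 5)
Your proof is correct. It follows the same basic strategy as the paper---substitute the idempotent identity for $a_{ii}$, expand via Lemma \ref{lema1}, and read off Peirce components---but with two small differences. First, you expand $2a_{ii} = e_i a_{ii} + a_{ii}e_i$ using item $(i)$ and then invoke $2$-torsion freeness to kill $2b_{12}=2b_{22}=0$, whereas the paper expands $a_{ii} = e_i a_{ii} e_i$ using item $(ii)$, which yields the exact identity $\Delta(a_{ii}) = \Delta(e_i)a_{ii} + \Xi(a_{ii})$ and needs no torsion hypothesis; the hypothesis is available, so your version is fine, just marginally less economical. Second, your preliminary derivation of $\Delta(e_2) \in \R_{12} + \R_{22}$ from $\Delta(e_2)=\Delta(e_2)e_2$ is correct and is a sensible way to make the ``symmetric'' Case~$2$ honest (the paper just says it is similar); note, though, that it can be bypassed entirely, since $a_{22} = a_{22}e_2$ forces $\Delta(e_2)a_{22} \in \R e_2 = \R_{12} + \R_{22}$ no matter where $\Delta(e_2)$ lives.
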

\begin{proof}
\textbf{Case $1$:} For $i= 1$, by (ii) of Lemma \ref{lema1} we have 
\begin{eqnarray*}
\Delta(a_{11}) &=& \Delta(e_1 a_{11} e_1) \\&=& \Delta(e_1)a_{11}e_1 + e_1\Xi(a_11)e_1 + e_1a_{11}\Xi(e_1) \\&=& \Delta(e_1)a_{11} +\Xi(a_11).
\end{eqnarray*}
Therefore $\Delta(a_{11}) \in \R_{11} + \R_{21}.$

\textbf{Case $2$:} The proof is similar to Case $1$.
\end{proof}

\begin{lemma}\label{lema7}
\begin{enumerate}
\item[(1)] $\Delta(a_{11}b_{12}) = \Delta(a_{11})b_{12} + a_{11}\Xi(b_{12})$ holds for all $a_{11} \in \R_{11}$ and $b_{12} \in \R_{12}$.
\item[(2)] $\Delta(a_{12}b_{22}) = \Delta(a_{12})b_{22} + a_{12}\Xi(b_{22})$ holds for all $a_{12} \in \R_{12}$ and $b_{22} \in \R_{22}$.
\item[(3)] $\Delta(a_{21}b_{12}) = \Delta(a_{21})b_{12} + a_{21}\Xi(b_{12})$ holds for all $a_{21} \in \R_{21}$ and $b_{12} \in \R_{12}$.
\item[(4)] $\Delta(a_{22}b_{22}) = \Delta(a_{22})b_{22} + a_{22}\Xi(b_{22})$ holds for all $a_{22},b_{22} \in \R_{22}$.
\end{enumerate}
\end{lemma}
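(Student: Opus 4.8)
The plan is to derive all four identities from the single identity Lemma~\ref{lema1}(i), namely $\Delta(ab+ba)=\Delta(a)b+a\Xi(b)+\Delta(b)a+b\Xi(a)$, applied to the relevant pair, and then to clean up the right-hand side using the Peirce placements already established in Lemmas~\ref{lema3}, \ref{lema5} and \ref{lema6}. The bookkeeping throughout is the Peirce arithmetic $\R_{mn}\R_{pq}=0$ unless $n=p$; in particular $\R_{i2}\R_{1q}=0$ and $\R_{i1}\R_{2q}=0$.

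For (1) and (2) the reversed product vanishes, since $b_{12}a_{11}\in\R_{12}\R_{11}=0$ and $b_{22}a_{12}\in\R_{22}\R_{12}=0$, so the left-hand side of Lemma~\ref{lema1}(i) collapses to $\Delta(a_{11}b_{12})$, respectively $\Delta(a_{12}b_{22})$. On the right the two cross terms die for the same reason: in (1), $\Delta(b_{12})a_{11}\in(\R_{12}+\R_{22})\R_{11}=0$ by Lemma~\ref{lema5} and $b_{12}\Xi(a_{11})\in\R_{12}\R_{11}=0$ by Lemma~\ref{lema3}; in (2), $\Delta(b_{22})a_{12}\in(\R_{22}+\R_{12})\R_{12}=0$ by Lemma~\ref{lema6} and $b_{22}\Xi(a_{12})\in\R_{22}\R_{12}=0$ by Lemma~\ref{lema3}. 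What survives is exactly the asserted identity, so (1) and (2) are immediate.

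For (3) the reversed product does not vanish, but it lands in a complementary pair of corners: $a_{21}b_{12}\in\R_{22}$ while $b_{12}a_{21}\in\R_{11}$. By Lemma~\ref{lema6}, $\Delta(a_{21}b_{12})\in\R_{22}+\R_{12}$ and $\Delta(b_{12}a_{21})\in\R_{11}+\R_{21}$, so the two summands on the left of Lemma~\ref{lema1}(i) occupy disjoint Peirce components. I would therefore project the whole identity onto $\R_{12}\oplus\R_{22}$. On the right, $\Delta(a_{21})b_{12}\in(\R_{11}+\R_{21})\R_{12}\subseteq\R_{12}+\R_{22}$ (Lemma~\ref{lema5}) and $a_{21}\Xi(b_{12})\in\R_{21}\R_{12}\subseteq\R_{22}$ (Lemma~\ref{lema3}) survive the projection, whereas $\Delta(b_{12})a_{21}$ and $b_{12}\Xi(a_{21})$ fall entirely in $\R_{11}+\R_{21}$ and are killed by it. Reading off the $\R_{12}\oplus\R_{22}$ part yields (3).

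Statement (4) is the genuine obstacle, and is why the hypotheses $(\spadesuit),(\clubsuit)$ and semiprimeness are present. Here the reversed product $b_{22}a_{22}$ sits in the same corner $\R_{22}$ as $a_{22}b_{22}$, so neither the vanishing trick of (1),(2) nor the component-splitting of (3) applies; moreover linearizing the defining identity $\Delta(a^2)=\Delta(a)a+a\Xi(a)$ at $a=a_{22}+b_{22}$ only reproduces the symmetric combination $\Delta(a_{22}b_{22}+b_{22}a_{22})$, which cannot be split into the two one-sided Leibniz rules by formal manipulation alone. To break this symmetry I would introduce an auxiliary $x_{12}\in\R_{12}$ and evaluate $\Delta(x_{12}a_{22}b_{22})$ in two ways via part (2): grouping as $(x_{12}a_{22})b_{22}$ and as $x_{12}(a_{22}b_{22})$ and cancelling the common term $\Delta(x_{12})a_{22}b_{22}$, one obtains $x_{12}\bigl(\Xi(a_{22}b_{22})-\Xi(a_{22})b_{22}-a_{22}\Xi(b_{22})\bigr)=0$ for every $x_{12}$, i.e. the associativity defect $E:=\Xi(a_{22}b_{22})-\Xi(a_{22})b_{22}-a_{22}\Xi(b_{22})\in\R_{22}$ satisfies $\R_{12}E=0$. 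The crux is then to upgrade such one-sided annihilation to the honest multiplicative identity: I would use semiprimeness (passing to the corner $e_2\R e_2$, itself semiprime) together with $(\clubsuit)$ to force the second-column components to vanish, thereby showing that the corresponding $\Delta$-defect $G:=\Delta(a_{22}b_{22})-\Delta(a_{22})b_{22}-a_{22}\Xi(b_{22})\in\R_{12}+\R_{22}$ satisfies $G\cdot\R_{22}=0$ and hence, by $(\clubsuit)$, $G=0$. I expect this annihilator-to-identity passage—verifying that $(\spadesuit),(\clubsuit)$ supply exactly the faithfulness needed to remove the auxiliary factor and transfer the information from $\Xi$ to $\Delta$—to be the main point requiring care.
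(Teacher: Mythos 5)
Your treatment of items (1)--(3) is sound. For (1) and (2) it coincides with the paper's argument: linearize via Lemma~\ref{lema1}(i), note the reversed product vanishes, and kill the cross terms using the Peirce placements from Lemmas~\ref{lema3}, \ref{lema5} and \ref{lema6}. For (3) you take a genuinely different but valid route: the paper writes $a_{21}b_{12}=e_2(a_{21}b_{12})e_2$ and applies Lemma~\ref{lema1}(ii) together with $\Delta(e_2)a_{21}=\Delta(a_{21})-\Xi(a_{21})$, whereas your projection of the linearized identity onto the column $\R e_2$ is cleaner and needs no auxiliary identity.

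Item (4) is where your proposal has a genuine gap, and you have misdiagnosed what closes it. The relation you extract, $x_{12}E=0$ for all $x_{12}\in\R_{12}$ with $E:=\Xi(a_{22}b_{22})-\Xi(a_{22})b_{22}-a_{22}\Xi(b_{22})\in\R_{22}$, is a \emph{left} annihilator condition, while $(\spadesuit)$ and $(\clubsuit)$ control only \emph{right} annihilators ($x\cdot\R_{12}=0$, $x\cdot\R_{22}=0$); neither applies to $\R_{12}E=0$, and semiprimeness alone does not rescue it (to conclude $E\R E=E\R_{22}E=0$ you would need $\R_{21}\R_{12}$ to exhaust $\R_{22}$, which is not assumed --- think of $\R$ a direct product with $e_1$ a central idempotent, where $\R_{12}=0$). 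Your fallback, showing that $G:=\Delta(a_{22}b_{22})-\Delta(a_{22})b_{22}-a_{22}\Xi(b_{22})$ satisfies $G\cdot\R_{22}=0$, is never actually derived and would seem to require expanding $\Delta(a_{22}b_{22}c_{22})$ by the very product rule (4) you are proving. The ingredient you are missing is Bre$\check{s}$ar's theorem, cited in the introduction: $\tau$ is a Jordan derivation of a $2$-torsion free semiprime ring, hence an honest derivation, so $\Xi=\tau-d_s$ is a derivation and $E=0$ identically, with no annihilator argument needed. The paper then finishes with the identity $\Delta(c_{22})=\Delta(e_2)c_{22}+\Xi(c_{22})$ (Lemma~\ref{lema1}(ii) applied to $e_2c_{22}e_2$), which gives
$$\Delta(a_{22}b_{22})=\Delta(e_2)a_{22}b_{22}+\Xi(a_{22}b_{22})=\Delta(e_2)a_{22}b_{22}+\Xi(a_{22})b_{22}+a_{22}\Xi(b_{22})=\Delta(a_{22})b_{22}+a_{22}\Xi(b_{22});$$
note that by this same identity your defect $G$ equals $E$. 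The hypotheses $(\spadesuit)$, $(\clubsuit)$ play no role in this lemma at all --- they are reserved for Lemma~\ref{lema8}.
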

\begin{proof}
For any $a_{11} \in \R_{11}$ and $b_{12} \in \R_{12}$, it follows from Lemma \ref{lema1} and Lemma \ref{lema5} that
\begin{eqnarray*}
\Delta(a_{11}b_{12}) &=& \Delta(a_{11}b_{12} + b_{12}a_{11}) \\&=& \Delta(a_{11})b_{12} + a_{11}\Xi(b_{12}) + \Delta(b_{12})a_{11} + b_{12}\Xi(a_{11})\\&=& \Delta(a_{11})b_{12} + a_{11}\Xi(b_{12}).
\end{eqnarray*}
Similarly, $(2)$ is true for all $a_{12} \in \R_{12}$ and $b_{22} \in \R_{22}$.

Now for any $a_{21} \in \R_{21}$ and $b_{12} \in \R_{12}$, it follows from Lemma \ref{lema1} and Lemma \ref{lema5} that
\begin{eqnarray*}
\Delta(a_{21}b_{12}) &=& \Delta(e_2a_{21}b_{12}e_2) \\&=& \Delta(e_2)a_{21}b_{12} + e_2\Xi(a_{21}b_{12})e_2 + e_2 a_{21}b_{12}\Xi(e_{2}) \\&=& (\Delta(a_{21}) - \Xi(a_{21}))b_{12} + \Xi(a_{21}b_{12}) \\&=& \Delta(a_{21})b_{12} + a_{21}\Xi(b_{12}).
\end{eqnarray*}
Finally for any $a_{22} \in \R_{22}$, by Lemma \ref{lema1} item $(ii)$, we have
\begin{eqnarray*}
\Delta(a_{22}) &=& \Delta(e_2a_{22}e_2)\\&=& \Delta(e_2)a_{22}e_2 + e_2\Xi(a_{22})e_2 + e_2a_{22}\Xi(e_2)\\&=& \Delta(e_2)a_{22} + \Xi(a_{22}),
\end{eqnarray*}
and hence $\Delta(a_{22}b_{22}) = \Delta(e_2)a_{22}b_{22} + \Xi(a_{22}b_{22})$ holds for all $a_{22},b_{22} \in \R_{22}$. Since 
\begin{eqnarray*}
\Delta(a_{22})b_{22} + a_{22}\Xi(b_{22}) &=& \Delta(e_2)a_{22}b_{22} + \Xi(a_{22})b_{22} + a_{22}\Xi(b_{22})\\&=& \Delta(e_2)a_{22}b_{22} + \Xi(a_{22}b_{22}),
\end{eqnarray*}
we get that $\Delta(a_{22}b_{22}) = \Delta(a_{22})b_{22} + a_{22}\Xi(b_{22})$.
\end{proof}

\begin{lemma}\label{lema8}
$\Delta(ab) = \Delta(a)b + a\Xi(b)$ for all $a,b \in \R$, that is, $\Delta$ is an additive generalized derivation.
\end{lemma}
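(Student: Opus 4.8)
The plan is to exploit biadditivity to reduce the asserted identity to Peirce components. Writing $a=\sum_{i,j}a_{ij}$ and $b=\sum_{k,l}b_{kl}$, the expression $(a,b)\mapsto \Delta(ab)-\Delta(a)b-a\Xi(b)$ is additive in each variable (as $\Delta$, $\Xi$ and multiplication are additive), so it suffices to prove $\Delta(a_{ij}b_{kl})=\Delta(a_{ij})b_{kl}+a_{ij}\Xi(b_{kl})$ for every pair of blocks. When $j\neq k$ the product $a_{ij}b_{kl}$ vanishes, so I only need the right-hand side to be $0$; this is immediate from $e_pe_q=0$ $(p\neq q)$, once I recall that $\Xi(b_{kl})\in\R_{kl}$ by \ref{lema3} and that $\Delta(a_{ij})$ lies in column $j$ (that is, in $\R_{1j}+\R_{2j}=\R e_j$) by \ref{lema5} and \ref{lema6}. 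Thus everything reduces to the eight products $\R_{ij}\R_{jl}$.

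Four of these — namely $\R_{11}\R_{12}$, $\R_{12}\R_{22}$, $\R_{21}\R_{12}$, $\R_{22}\R_{22}$, all landing in column $2$ — are exactly \ref{lema7}. The remaining four, $\R_{11}\R_{11}$, $\R_{12}\R_{21}$, $\R_{21}\R_{11}$, $\R_{22}\R_{21}$, all land in column $1$, and I would dispatch them by one uniform device: probe the putative error from the right by an arbitrary $c_{12}\in\R_{12}$. For the model case $a_{11},b_{11}\in\R_{11}$, I evaluate $\Delta(a_{11}b_{11}c_{12})$ in two ways. Grouping as $(a_{11}b_{11})c_{12}$ and applying \ref{lema7}(1) gives $\Delta(a_{11}b_{11})c_{12}+a_{11}b_{11}\Xi(c_{12})$; grouping as $a_{11}(b_{11}c_{12})$ and applying \ref{lema7}(1) again gives $\Delta(a_{11})b_{11}c_{12}+a_{11}\Xi(b_{11}c_{12})$.

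Comparing the two and cancelling the common term yields $\big(\Delta(a_{11}b_{11})-\Delta(a_{11})b_{11}-a_{11}\Xi(b_{11})\big)c_{12}=0$, provided $\Xi$ obeys the Leibniz rule on this block pair, $\Xi(b_{11}c_{12})=\Xi(b_{11})c_{12}+b_{11}\Xi(c_{12})$. This is not assumed, but follows by applying \ref{lema1}(i) to the Jordan derivation $\Xi$ together with \ref{lema3}: since $c_{12}b_{11}=0$ and every stray term falls in $\R_{12}\R_{11}=0$, the Jordan identity collapses to the ordinary Leibniz rule; the same computation gives $\Xi(b_{21}c_{12})=\Xi(b_{21})c_{12}+b_{21}\Xi(c_{12})$, needed for the other cases. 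Writing $x$ for the error term, I have thus shown $x\,\R_{12}=0$; moreover \ref{lema5} and \ref{lema6} force $x\in\R_{11}+\R_{21}$, so $x$ sits in column $1$. Condition $(\spadesuit)$ says precisely that $x\R_{12}=0$ forces $e_1xe_1=0$ and $e_2xe_1=0$, i.e. it annihilates the whole column-$1$ part of $x$; hence $x=0$, which is the desired identity. The remaining three column-$1$ products go through identically, using the appropriate parts of \ref{lema7} for each regrouping, and the dual condition $(\clubsuit)$ plays the symmetric role if one instead probes by $\R_{22}$.

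The step I expect to be the main obstacle is organizing the two-way evaluation cleanly: one must pick the grouping so that \emph{both} regroupings fall under the already-established \ref{lema7}, verify the auxiliary Leibniz rule for $\Xi$ on each relevant block pair, and — most delicately — track Peirce components carefully enough to be certain the residual $x$ really lies in column $1$, since that is exactly the hypothesis under which $(\spadesuit)$ may be invoked to conclude $x=0$.
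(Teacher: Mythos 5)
Your proposal is correct, and its engine is the same as the paper's: evaluate $\Delta(\,\cdot\,x_{12})$ two ways via Lemma \ref{lema7}, conclude that the error term annihilates $\R_{12}$ on the right, and invoke ($\spadesuit$). The organization, however, is genuinely different, and the difference buys something. The paper keeps $a,b$ arbitrary, so its error term $\Delta(ab)-\Delta(a)b-a\Xi(b)$ has all four Peirce components a priori; it must therefore probe by $\R_{12}$ \emph{and} by $\R_{22}$, using ($\spadesuit$) to kill the column-$1$ components and ($\clubsuit$) to kill the column-$2$ ones. You instead reduce blockwise first, observe that the four column-$2$ products are literally Lemma \ref{lema7} and the mixed products vanish on both sides, and then note that for the four column-$1$ products the error already lies in $\R_{11}+\R_{21}$ by Lemmas \ref{lema5}--\ref{lema6}, so the single condition ($\spadesuit$) finishes the job --- your proof of this lemma never actually uses ($\clubsuit$), which is a small but real sharpening. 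Two minor points to tighten. First, your auxiliary Leibniz rule for $\Xi$ on $\R_{21}\times\R_{12}$ is not obtained by ``the same computation'' as on $\R_{11}\times\R_{12}$: there $c_{12}b_{21}\in\R_{11}$ need not vanish, so Lemma \ref{lema1}(i) gives $\Xi(b_{21}c_{12}+c_{12}b_{21})$ and you must separate the $\R_{22}$ and $\R_{11}$ components using Lemma \ref{lema3} to extract $\Xi(b_{21}c_{12})=\Xi(b_{21})c_{12}+b_{21}\Xi(c_{12})$; the conclusion is right, but the cross terms do not simply die. (The paper's ``Second'' and ``Fourth'' displays silently use the same identity, so this is a gap you share with, rather than inherit from, the source.) Second, when you apply Lemma \ref{lema1}(i) to $\Xi$ you are implicitly treating the Jordan derivation $\Xi$ as a generalized Jordan derivation relating to itself; that is legitimate but worth saying.
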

\begin{proof}
$\bullet$ First : For any $a,b \in \R$ and $x_{12} \in \R_{12}$, by Lemmas \ref{lema1}-\ref{lema7}, we have
\begin{eqnarray*}
\Delta(abx_{12}) &=& \Delta(a_{11}b_{11}x_{12} + a_{12}b_{21}x_{12} + a_{22}b_{21}x_{12} + a_{21}b_{11}x_{12}) \\&=& \Delta(a_{11}b_{11})x_{12} + a_{11}b_{11}\Xi(x_{12}) +  \Delta(a_{12}b_{21})x_{12} + a_{12}b_{21}\Xi(x_{12}) \\&+& \Delta(a_{22}b_{21})x_{12} + a_{22}b_{21}\Xi(x_{12}) + \Delta(a_{21}b_{11})x_{12} + a_{21}b_{11}\Xi(x_{12}) \\&=& \Delta(a_{11}b_{11} + a_{12}b_{21} + a_{22}b_{21} + a_{21}b_{11})x_{12} \\&+& (a_{11}b_{11} + a_{12}b_{21} + a_{22}b_{21} + a_{21}b_{11})\Xi(x_{12}) \\&=&
\Delta(ab)x_{12} + ab\Xi(x_{12}).
\end{eqnarray*}

$\bullet$ Second: For any $x_{12} \in \R_{12}$, by Lemmas \ref{lema1}-\ref{lema7}, we get
\begin{eqnarray*}
\Delta(abx_{12}) &=& \Delta(a_{11}b_{11}x_{12} + a_{12}b_{21}x_{12} + a_{22}b_{21}x_{12} + a_{21}b_{11}x_{12}) \\&=& \Delta(a_{11})b_{11}x_{12} + a_{11}\Xi(b_{11}x_{12}) +  \Delta(a_{12})b_{21}x_{12} + a_{12}\Xi(b_{21}x_{12}) \\&+& \Delta(a_{22})b_{21}x_{12} + a_{22}\Xi(b_{21}x_{12}) + \Delta(a_{21})b_{11}x_{12} + a_{21}\Xi(b_{11}x_{12}) \\&=& \Delta(a)bx_{12} + a\Xi(b)x_{12} + ab\Xi(x_{12}).
\end{eqnarray*}
So $(\Delta(ab) - \Delta(a)b - a\Xi(b))x_{12} = 0$ for any $x_{12} \in \R_{12}$. Hence $e_1(\Delta(ab) - \Delta(a)b - a\Xi(b))e_1 = 0 = e_2(\Delta(ab) - \Delta(a)b - a\Xi(b))e_1$, by condition ($\spadesuit$).

$\bullet$ Third: For any $x_{22} \in \R_{22}$, we compute $\Delta(abx_{22})$.
\begin{eqnarray*}
\Delta(abx_{22}) &=& \Delta(a_{11}b_{12}x_{22}) + \Delta(a_{12}b_{22}x_{22}) + \Delta(a_{21}b_{12}x_{22}) + \Delta(a_{22}b_{22}x_{22}) \\&=& 
\Delta(a_{11}b_{12})x_{22} + a_{11}b_{12}\Xi(x_{22}) + \Delta(a_{12}b_{22})x_{22} +  a_{12}b_{22}\Xi(x_{22}) \\&+& \Delta(a_{21}b_{12})x_{22} + a_{21}b_{12}\Xi(x_{22}) + \Delta(a_{22}b_{22})x_{22} + a_{22}b_{22}\Xi(x_{22}) \\&=& \Delta(ab)x_{22} + a_{11}b_{12}\Xi(x_{22}) +a_{12}b_{22}\Xi(x_{22}) + a_{21}b_{12}\Xi(x_{22}) \\&+& a_{22}b_{22}\Xi(x_{22}).
\end{eqnarray*} 

$\bullet$ Fourth: On the other hand,
\begin{eqnarray*}
\Delta(abx_{22}) &=& \Delta(a_{11}b_{12}x_{22}) + \Delta(a_{12}b_{22}x_{22}) + \Delta(a_{21}b_{12}x_{22}) + \Delta(a_{22}b_{22}x_{22}) \\&=& 
\Delta(a_{11})b_{12}x_{22} + a_{11}\Xi(b_{12}x_{22}) + \Delta(a_{12})b_{22}x_{22} +  a_{12}\Xi(b_{22}x_{22}) \\&+& \Delta(a_{21})b_{12}x_{22} + a_{21}\Xi(b_{12}x_{22}) + \Delta(a_{22})b_{22}x_{22} + a_{22}\Xi(b_{22}x_{22}) \\&=& \Delta(a)bx_{22} + a_{11}\Xi(b_{12}x_{22}) +a_{12}\Xi(b_{22}x_{22}) + a_{21}\Xi(b_{12}x_{22}) \\&+& a_{22}\Xi(b_{22}x_{22}) \\&=& \Delta(a)bx_{22} + a_{11}\Xi(b_{12})x_{22} + a_{11}b_{12}\Xi(x_{22}) +a_{12}\Xi(b_{22})x_{22} \\&+& a_{12}b_{22}\Xi(x_{22}) + a_{21}\Xi(b_{12})x_{22} + a_{21}b_{12}\Xi(x_{22}) + a_{22}\Xi(b_{22})x_{22} \\&+& a_{22}b_{22}\Xi(x_{22}) \\&=& \Delta(a)bx_{22} + a\Xi(b)x_{22} + a_{11}b_{12}\Xi(x_{22}) + a_{12}b_{22}\Xi(x_{22}) \\&+& a_{21}b_{12}\Xi(x_{22}) + a_{22}b_{22}\Xi(x_{22}). 
\end{eqnarray*}
Thus comparing the above two equations, we obtain $(\Delta(ab) - \Delta(a)b - a\Xi(b))x_{22} = 0$ for any $x_{22} \in \R_{22}$ then $e_1(\Delta(ab) - \Delta(a)b - a\Xi(b))e_2 = 0 = e_2(\Delta(ab) - \Delta(a)b - a\Xi(b))e_2$ by condition ($\clubsuit$). 
Therefore $\Delta(ab) = \Delta(a)b + a\Xi(b)$.
\end{proof}

\noindent {\bf Proof of Theorem \ref{main}:} 
From the above lemmas, we have proved that $\Delta : \R \rightarrow \R$ is an additive generalized derivation. Since $\Delta(a) = \delta(a) - d_{s}(a)$ for each $a \in \R$, by a simple calculation, we see that $\delta$ is also an additive generalized derivation. The proof is completed.


\begin{cor}
Let $M_2(\mathbb{C})$ denote the algebra of all $2\times 2$ complex matrices and
$B$ be an arbitrary algebra over the complex field $\mathbb{C}$. Suppose that $\delta : M_2(\mathbb{C}) \rightarrow B$ is a linear mapping such that $\delta(E^2) = \delta(E)E + E\tau (E)$ holds for all idempotent $E$ in $M_2(\mathbb{C})$, where $\tau : M_2(\mathbb{C}) \rightarrow B$ is a linear mapping satisfying $\tau(E) =
\tau(E)E + E\tau(E)$ for any idempotent $E$ in $M_2(\mathbb{C})$, then $\delta$ is a generalized derivation.
\end{cor}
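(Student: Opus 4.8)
The plan is to reduce the statement to Theorem \ref{main}. Since the hypotheses constrain $\delta$ and $\tau$ only on idempotents, the first and main task is to upgrade them to the full generalized Jordan derivation identity on all of $M_2(\mathbb{C})$; this should be possible because $M_2(\mathbb{C})$ is linearly spanned by its idempotents and $\mathbb{C}$ is an infinite field. Throughout I would regard $B$ as a unital $M_2(\mathbb{C})$-bimodule (equivalently, $M_2(\mathbb{C})$ sits inside $B$), so that the mixed products $\delta(A)B$ and $A\tau(B)$ appearing in the identities are meaningful.

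Since $E^2=E$ for every idempotent $E$, the hypotheses read $\delta(E)=\delta(E)E+E\tau(E)$ and $\tau(E)=\tau(E)E+E\tau(E)$. I would then introduce the symmetric bilinear maps
$$F(A,B)=\delta(AB+BA)-\delta(A)B-A\tau(B)-\delta(B)A-B\tau(A),$$
$$G(A,B)=\tau(AB+BA)-\tau(A)B-A\tau(B)-\tau(B)A-B\tau(A),$$
so that $F(A,A)=2\big(\delta(A^2)-\delta(A)A-A\tau(A)\big)$ and likewise for $G$. Substituting $B=A=E$ and using $E^2=E$ shows at once that $F(E,E)=G(E,E)=0$ for every idempotent $E$.

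The crucial step is to promote this to $F\equiv 0$ and $G\equiv 0$, which I would carry out by polarization over the infinite field $\mathbb{C}$. Feeding the two-parameter family of rank-one idempotents $E(s,t)=\tfrac{1}{1+st}\left(\begin{smallmatrix}1&t\\ s&st\end{smallmatrix}\right)$ into $F(E,E)=0$ and expanding $(1+st)^2F\big(E(s,t),E(s,t)\big)$ as a polynomial identity in $s,t\in\mathbb{C}$, the vanishing of every coefficient forces all components of the symmetric form $F$ to vanish except for the single relation coming from the $st$-coefficient; evaluating $F(E,E)=0$ at the idempotent $E=I$ then kills the remaining component. Hence $F\equiv 0$, and the identical computation gives $G\equiv 0$. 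Putting $B=A$ and using $2$-torsion freeness yields $\delta(A^2)=\delta(A)A+A\tau(A)$ and $\tau(A^2)=\tau(A)A+A\tau(A)$ for all $A$, so $\delta$ is a (linear) generalized Jordan derivation on $M_2(\mathbb{C})$ with relating Jordan derivation $\tau$. I expect this density argument to be the main obstacle: one must be sure that the idempotent variety of $M_2(\mathbb{C})$ is rich enough that the only symmetric bilinear form vanishing on its diagonal is zero, and it is precisely here that the abundance of rank-one idempotents together with $I$, and the infinitude of $\mathbb{C}$, are indispensable.

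Finally I would invoke Theorem \ref{main} with $\R=M_2(\mathbb{C})$ and $e=E_{11}$. This is legitimate: $M_2(\mathbb{C})$ is a $2$-torsion free, simple (hence semiprime) unital ring carrying the nontrivial idempotent $E_{11}$, and a short check shows that the Peirce decomposition relative to $E_{11}$ satisfies $(\spadesuit)$ and $(\clubsuit)$ — for instance $x\cdot\R_{12}=0$ forces the $(1,1)$ and $(2,1)$ entries of $x$ to vanish, which is exactly $e_1xe_1=e_2xe_1=0$. Since the proof of Theorem \ref{main} uses only the action of $M_2(\mathbb{C})$ on the images of $\delta$ and $\tau$, it applies verbatim to these $B$-valued maps (with the element $s$ of Lemma \ref{lema2} now taken in $B$), giving $\delta(AB)=\delta(A)B+A\tau(B)$ for all $A,B$ and completing the proof.
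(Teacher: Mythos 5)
Your proof is correct, and its overall skeleton --- check that $M_2(\mathbb{C})$ with $e=E_{11}$ satisfies the hypotheses of Theorem \ref{main}, including $(\spadesuit)$ and $(\clubsuit)$, and then invoke that theorem --- is the same as the paper's. Where you genuinely diverge is in the preliminary step of upgrading the identities from idempotents to all of $M_2(\mathbb{C})$: the paper simply cites Theorem $4.1$ of \cite{Jing} to conclude that $\tau$ is a derivation and that $\delta(A^2)=\delta(A)A+A\tau(A)$ holds for every $A$, whereas you prove this from scratch by polarization over the idempotent variety. Your computation does check out: writing $(1+st)E(s,t)=e_{11}+te_{12}+se_{21}+st\,e_{22}$, the coefficients of the polynomial $(1+st)^2F\bigl(E(s,t),E(s,t)\bigr)$ force $F(e_{ij},e_{kl})=0$ for every pair except for the single relation $F(e_{11},e_{22})+F(e_{12},e_{21})=0$, and $F(I,I)=0$ then gives $F(e_{11},e_{22})=0$; the identical argument handles $G$. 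You end up with slightly less than the citation (namely $\tau$ is only shown to be a Jordan derivation rather than a derivation), but that is exactly what the definition of generalized Jordan derivation and hence Theorem \ref{main} require. Your route buys self-containedness and makes visible precisely where the abundance of rank-one idempotents together with $I$ is used; the paper's route buys brevity. Two caveats are shared by both proofs, and you at least flag the first: the products $\delta(E)E$ and $E\tau(E)$ only make sense once $B$ is regarded as a unital $M_2(\mathbb{C})$-bimodule, and one must check that the proof of Theorem \ref{main} survives for $B$-valued maps --- in particular the Peirce decomposition of $\tau(e_1)$ in Lemma \ref{lema2} and the use of $(\spadesuit)$, $(\clubsuit)$ on elements of $B$ in Lemma \ref{lema8}. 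It does, since for $x\in B$ the relation $xe_{12}=0$ forces $xe_{11}=xe_{12}e_{21}=0$, and similarly for $(\clubsuit)$, so your closing remark is justified.
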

\begin{proof}
Let $M_2(\mathbb{C}) = E_1 M_2(\mathbb{C}) E_1 \oplus E_1 M_2(\mathbb{C}) E_2 \oplus E_2 M_2(\mathbb{C}) E_1 \oplus E_2 M_2(\mathbb{C}) E_2$ the Peirce decomposition relative to the idempotent $E_1 =\left[
\begin{array}{c c}
1& 0\\
0& 0\\
\end{array}\right]$. Clearly $M_2(\mathbb{C})$ is semiprime and satisfies ($\spadesuit$), ($\clubsuit$).
By Theorem $4.1$ in \cite{Jing} we have $\tau$ is a derivation and $\delta(A^2) = \delta(A)A + A\tau(A)$ for any $A \in M_2(\mathbb{C})$. Therefore, by Theorem \ref{main}, $\delta$ is a generalized derivation.
\end{proof}

\begin{defn}
Let $U(\R)$ be the group of units of $\R$. An ideal $I$ of a ring $\R$ is unit-prime if for any $a, b \in \R$, $aU(\R)b \subseteq I$ implies $a \in I$ or $b \in I$, and, unit-semiprime if for any $a \in \R$, $aU(\R)a \subseteq I$ implies $a \in I$. A ring $\R$ is unit-(semi)prime if $\left(0\right)$ is a unit-(semi)prime ideal of $\R$.
\end{defn}

\begin{thm}
Matrix ring over unit-semiprime ring are unit-semiprime.
\end{thm}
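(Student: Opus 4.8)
The plan is to fix a matrix $A \in M_n(\R)$ satisfying $A\,U(M_n(\R))\,A = 0$ (that is, $AUA = 0$ for every unit $U$ of $M_n(\R)$) and to deduce $A = 0$; by the definition this is precisely unit-semiprimeness of $M_n(\R)$. The strategy is to substitute into the hypothesis only three economical families of units of $M_n(\R)$ --- the identity, the elementary transvections, and the diagonal unit matrices --- and then read off the vanishing of the entries of $A$ from unit-semiprimeness of $\R$. I would assume $n \geq 2$, the case $n = 1$ being immediate since $M_1(\R) = \R$.

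First I would record which matrices are units, using that $\R$ is unital. The identity $I$ is a unit; for $k \neq l$ and any $r \in \R$ the transvection $I + rE_{kl}$ is a unit with inverse $I - rE_{kl}$ (because $E_{kl}^2 = 0$ when $k \neq l$); and for units $u_1,\dots,u_n \in U(\R)$ the matrix $D = \mathrm{diag}(u_1,\dots,u_n)$ is a unit with inverse $\mathrm{diag}(u_1^{-1},\dots,u_n^{-1})$.

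Second, applying the hypothesis to $I$ gives $A^2 = 0$, and to $I + rE_{kl}$ gives $A(I + rE_{kl})A = 0$; subtracting yields $A(rE_{kl})A = 0$ for all $r \in \R$ and all $k \neq l$. Writing $A = (a_{ij})$ and computing the $(i,m)$ entry, this reads $a_{ik}\,r\,a_{lm} = 0$ for all $i,m$, all $r \in \R$, and all $k \neq l$. For an off-diagonal position $(p,q)$ with $p \neq q$, the choice $i = l = p$ and $k = m = q$ is legitimate (then the transvection index $(k,l) = (q,p)$ has $q \neq p$) and gives $a_{pq}\,\R\,a_{pq} = 0$; in particular $a_{pq}\,U(\R)\,a_{pq} = 0$, so unit-semiprimeness of $\R$ forces $a_{pq} = 0$. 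Hence $A$ is diagonal. Third, with $A = \mathrm{diag}(a_{11},\dots,a_{nn})$ now diagonal, I would apply the hypothesis to the diagonal units $D$: here $ADA = \mathrm{diag}(a_{11}u_1a_{11},\dots,a_{nn}u_na_{nn})$, so $a_{ii}\,u_i\,a_{ii} = 0$ for every $i$. Fixing $i$ and letting $u_i$ range over $U(\R)$ (with the remaining entries equal to $1$) gives $a_{ii}\,U(\R)\,a_{ii} = 0$, whence $a_{ii} = 0$. Therefore $A = 0$.

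The delicate point is the diagonal entries. Once $A$ is diagonal, the transvection units only produce the cross relations $a_{kk}\,\R\,a_{ll} = 0$ for $k \neq l$, which cannot by themselves annihilate a single $a_{ii}$; the argument succeeds precisely because the diagonal unit matrices are also available, and it is here that the unital hypothesis on $\R$ (ensuring a supply of units containing $1$) is essential. I expect the bulk of the care to lie in the matrix-entry bookkeeping and in checking at each substitution that the matrix inserted into $A(-)A$ is genuinely invertible rather than an arbitrary element of $M_n(\R)$.
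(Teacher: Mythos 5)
Your proof is correct and complete. The paper itself offers no argument for this theorem: it simply cites Theorem 11 of C\u{a}lug\u{a}reanu's paper \emph{A new class of semiprime rings}, so there is no internal proof to compare against. What you supply is a self-contained elementary argument, and every step checks out: the entry computation $(A\,rE_{kl}\,A)_{im}=a_{ik}\,r\,a_{lm}$ is right, the index choice $i=l=p$, $k=m=q$ is legitimate exactly because $p\neq q$, and it yields the full relation $a_{pq}\,\mathfrak{R}\,a_{pq}=0$, which is stronger than needed ($a_{pq}\,U(\mathfrak{R})\,a_{pq}=0$ already suffices by unit-semiprimeness of $\mathfrak{R}$). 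Your observation about the diagonal entries is the genuinely delicate point and you handle it correctly: the transvections alone only give the cross relations $a_{kk}\,\mathfrak{R}\,a_{ll}=0$ for $k\neq l$ (and $U=I$ only gives $a_{ii}^2=0$, which does not force $a_{ii}=0$ even in a semiprime ring), so the diagonal unit matrices $\mathrm{diag}(u_1,\dots,u_n)$ are indispensable, and they require $\mathfrak{R}$ to be unital --- an assumption consistent with the paper's standing hypothesis of a unity ring and implicit in the very definition of $U(\mathfrak{R})$. The one thing your write-up buys beyond the paper is transparency: a reader sees exactly which units are needed (identity, transvections, diagonal units) and where unit-semiprimeness of the base ring enters, rather than being sent to an external reference.
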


\begin{proof}
See Theorem $11$ in \cite{grigore}
\end{proof}

\begin{cor}
Let $M_2$ be a $2\times 2$ matrix ring over unit-semiprime ring. Suppose that $\delta : M_2 \rightarrow M_2$ is a linear mapping such that $\delta(A^2) = \delta(A)A + A\tau (A)$ holds for all $A$ in $M_2$, where $\tau : M_2 \rightarrow M_2$ is a linear mapping satisfying $\tau(A^2) =
\tau(A)A + A\tau(A)$ for any $A$ in $M_2$, then $\delta$ is a generalized derivation.
\end{cor}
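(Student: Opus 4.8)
The plan is to deduce the statement directly from Theorem \ref{main} by checking that each of its hypotheses is satisfied by $M_2$, by a suitable idempotent, and by the pair $(\delta,\tau)$. First, the two identities in the hypothesis say exactly that $\tau$ is a linear (hence additive) Jordan derivation and that $\delta$ is an additive generalized Jordan derivation whose relating Jordan derivation is $\tau$; so no work is needed on that point, since linearity gives additivity for free. Next, by the preceding theorem (a $2\times 2$ matrix ring over a unit-semiprime ring is again unit-semiprime) the ring $M_2$ is unit-semiprime, and unit-semiprimeness implies semiprimeness: if $a M_2 a = 0$ then, since $U(M_2)\subseteq M_2$, we have $a\,U(M_2)\,a \subseteq a M_2 a = 0$, whence $a=0$ by unit-semiprimeness. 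Thus $M_2$ is a semiprime unity ring.

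Then I would exhibit the nontrivial idempotent and verify the mild conditions. Let $S$ denote the underlying unit-semiprime ring and take $E_1 = \begin{pmatrix} 1 & 0 \\ 0 & 0 \end{pmatrix}$ and $E_2 = 1-E_1 = \begin{pmatrix} 0 & 0 \\ 0 & 1 \end{pmatrix}$, so that $E_1\neq 0$ and $E_1\neq 1$. The Peirce component $(M_2)_{12} = E_1 M_2 E_2$ consists of the matrices $\begin{pmatrix} 0 & s \\ 0 & 0 \end{pmatrix}$ with $s\in S$. For condition ($\spadesuit$), write $x=(x_{ij})$ and compute $x\begin{pmatrix} 0 & s \\ 0 & 0 \end{pmatrix} = \begin{pmatrix} 0 & x_{11}s \\ 0 & x_{21}s \end{pmatrix}$; if this vanishes for every $s\in S$ then, taking $s=1$, we obtain $x_{11}=x_{21}=0$, that is, $E_1 x E_1 = 0$ and $E_2 x E_1 = 0$. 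The verification of ($\clubsuit$) is entirely symmetric, using $(M_2)_{22}=E_2 M_2 E_2$ and again $s=1$. Here it is essential that $S$ carries a unity, which is available since the very notion of unit-semiprimeness presupposes a group of units.

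With semiprimeness, the nontrivial idempotent $E_1$, and conditions ($\spadesuit$) and ($\clubsuit$) all in place, the conclusion would follow by applying Theorem \ref{main} to $M_2$. The one genuine obstacle is the $2$-torsion freeness demanded by Theorem \ref{main}: unlike the case of $M_2(\mathbb{C})$ in the previous corollary, where characteristic zero makes this automatic, a general unit-semiprime base ring need not be $2$-torsion free, and semiprimeness alone does not supply it. I would therefore either add $2$-torsion freeness as an explicit hypothesis (the natural assumption, matching Theorem \ref{main}) or restrict to unit-semiprime base rings $S$ that are $2$-torsion free, in which case the $2$-torsion freeness of $M_2$ is immediate entrywise. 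Modulo this point, all hypotheses of Theorem \ref{main} hold, and we conclude that $\delta$ is a generalized derivation.
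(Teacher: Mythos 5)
Your argument is correct and is precisely the route the paper intends (it actually prints no proof for this corollary): invoke the preceding theorem to get unit-semiprimeness of $M_2$, note that unit-semiprime implies semiprime, verify ($\spadesuit$) and ($\clubsuit$) for $E_1=\left[\begin{smallmatrix}1&0\\0&0\end{smallmatrix}\right]$ using the unity of the base ring, and apply Theorem \ref{main}. Your further observation that $2$-torsion freeness is genuinely missing from the statement is also correct and worth recording: for instance $\mathbb{F}_2$ is unit-semiprime, so $M_2(\mathbb{F}_2)$ satisfies every printed hypothesis yet is not $2$-torsion free, while the proof of Theorem \ref{main} really does use $2$-torsion freeness (e.g.\ to conclude $b_{11}=0$ from $2b_{11}=0$ in Case 2 of Lemma \ref{lema3}); so the hypothesis you propose to add should indeed be added.
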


In \cite{Jing}, the authors introduced the concept of generalized Jordan triple derivation. Let $\R$ be a
ring and $\delta : \R \rightarrow \R$ an additive map. If there is a Jordan triple derivation $\tau : \R \rightarrow \R$ such that
$\delta(aba) = \delta(a)ba + a\tau(b)a + ab\tau(a)$ for every $a,b \in \R$, then $\delta$ is called a generalized Jordan triple derivation, and $\tau$ is the relating
Jordan triple derivation. Recall that $\tau$ is a Jordan triple derivation if $\tau(aba) = \tau(a)ba + a\tau(b)a + ab\tau(a)$ for any $a,b \in \R$.

The authors conjecture that every generalized Jordan triple derivation on $2$-torsion free semiprime ring is a generalized derivation.
In our case it follows the following corollary.

\begin{cor}
Let $\R$ be a $2$-torsion free semiprime unity ring satisfying $(\spadesuit)$, $(\clubsuit)$ and $\delta$ be an additive generalized Jordan triple
derivation from $\R$ into itself. If there exist an idempotent $e$ so that $e \neq 0$, $e \neq 1$ in $\R$, then $\delta$ is an additive generalized derivation.
\end{cor}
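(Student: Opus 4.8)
The plan is to show that, over such a ring, every additive generalized Jordan triple derivation is already an additive generalized Jordan derivation, and then to quote Theorem \ref{main}. The decisive observation is that $\R$ carries a unity, so in both triple identities I am free to specialize the middle factor to $1$; this collapses the cubic relations into the quadratic ones that Theorem \ref{main} is about.

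First I would record that the relating Jordan triple derivation $\tau$ annihilates the unit. Putting $a=b=1$ in the defining identity $\tau(aba)=\tau(a)ba+a\tau(b)a+ab\tau(a)$ gives $\tau(1)=3\tau(1)$, hence $2\tau(1)=0$; since $\R$ is $2$-torsion free, this forces $\tau(1)=0$. This single fact is what makes the extraneous middle terms disappear in the next step.

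Next I would degenerate the two triple identities by setting $b=1$. For $\tau$ this yields $\tau(a^2)=\tau(a)a+a\tau(1)a+a\tau(a)=\tau(a)a+a\tau(a)$, so $\tau$ is an additive Jordan derivation. For $\delta$ it yields $\delta(a^2)=\delta(a)a+a\tau(1)a+a\tau(a)=\delta(a)a+a\tau(a)$. Hence $\delta$ is an additive generalized Jordan derivation whose relating Jordan derivation is exactly $\tau$, additivity of both maps being inherited from the hypothesis.

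Finally, $\R$ is a $2$-torsion free semiprime unity ring satisfying $(\spadesuit)$ and $(\clubsuit)$ and containing the nontrivial idempotent $e$, so Theorem \ref{main} applies verbatim and shows that $\delta$ is an additive generalized derivation. Because the entire argument rests only on substituting the unit, I expect no genuine obstacle; the one point deserving care is the verification $\tau(1)=0$, since it is precisely the vanishing of the terms $a\tau(1)a$ that reduces the triple identities to the square identities required to invoke Theorem \ref{main}.
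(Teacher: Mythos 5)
Your proposal is correct and follows essentially the same route as the paper: the paper likewise observes that $\tau(e_1+e_2)=\tau(1)=0$, deduces that $\tau$ is a Jordan derivation and $\delta$ a generalized Jordan derivation by specializing the middle factor to the unit, and then invokes Theorem \ref{main}. Your write-up merely makes explicit the substitution $a=b=1$ and the use of $2$-torsion freeness that the paper leaves as ``easy to check.''
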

\begin{proof}
Let $\delta : \R \rightarrow \R$ be an additive generalized Jordan triple derivation and $\tau : \R \rightarrow \R$ the relating Jordan triple derivation. Note that $\tau(e_1 + e_2) = 0$, so $\tau$ is in fact a Jordan derivation. Now it is easy to check that a generalized Jordan triple derivation on $\R$ is a generalized Jordan derivation. 
Therefore, by Theorem \ref{main}, $\delta$ is an additive generalized derivation.
\end{proof}

\end{document}